\newif\iffinal
\long\def\comment#1{\relax}
\DeclareMathOperator{\Int}{Int}
\DeclareMathOperator{\divides}{\big\vert}
\let\natn=\N
\newcommand{\primes}{\mathbb{P}}
\newcommand{\Z}{\mathbb{Z}}
\newcommand{\IntZ}{\Int(\Z)}
\newcommand{\IntD}{\Int(D)}
\newcommand{\fixdiv}{\mathsf{d}}
\let\fd=\fixdiv
\newcommand{\vp}{v_{\scriptscriptstyle p}}
\newcommand{\fdp}{\mathsf{d}_{\scriptscriptstyle p}}
\def\card#1{\left|#1\right|}
\newtheorem{theorem}{Theorem}
\newtheorem{proposition}{Proposition}[section]
\newtheorem{corollary}[proposition]{Corollary}
\newtheorem{lemma}[proposition]{Lemma}
\theoremstyle{definition}
\newtheorem{definition}[proposition]{Definition}
\newtheorem{example}[proposition]{Example}
\newtheorem{remark}[proposition]{Remark}
\tikzset{main node/.style={circle,draw,minimum size=0.4cm,inner sep=0pt},}
\author{Sophie Frisch}
\address{\parbox{\linewidth}{Institut für Analysis und Zahlentheorie,
  Graz University of Technology\\
    Kopernikusgasse 24, 8010 Graz, Austria}}
\email{\href{mailto:frisch@math.tugraz.at}{frisch@math.tugraz.at}}
\thanks{S.~Frisch is supported by the Austrian Science Fund (FWF):
P~30934}
\author{Sarah Nakato}
\address{\parbox{\linewidth}{Institut für Analysis und Zahlentheorie,
  Graz University of Technology\\
    Kopernikusgasse 24, 8010 Graz, Austria}}
\email{\href{mailto:snakato@tugraz.at}{snakato@tugraz.at}}
\thanks{S.~Nakato is supported by the Austrian Science Fund (FWF):
P~30934}
\title[Graph-theoretic criterion for absolute irreducibility]%
{A graph-theoretic criterion for absolute irreducibility of
integer-valued polynomials with square-free denominator}
\keywords{factorization, non-unique factorization,
irreducible elements, absolutely irreducible elements, atoms,
strong atoms, atomic domains, integer-valued polynomials,
simple graphs, connected graphs}
\subjclass[2010]{13A05, 13B25, 13F20, 11R09, 11C08, 13P05}
\begin{document}

\begin{abstract}
An irreducible element of a commutative ring is absolutely irreducible
if no power of it has more than one (essentially different) factorization
into irreducibles. In the case of the ring
$\IntD=\{f\in K[x]\mid f(D)\subseteq D\}$, of integer-valued polynomials
on a principal ideal domain $D$ with quotient field $K$,
we give an easy to verify graph-theoretic sufficient condition for an
element to be absolutely irreducible and show a partial converse:
the condition is necessary and sufficient
for polynomials with square-free denominator.
\end{abstract}

\maketitle

\section{Introduction}
An intriguing feature of non-unique factorization (of elements of an
integral domain into irreducibles) is the existence of non-absolutely
irreducible elements, that is, irreducible elements some of whose powers
allow several essentially different factorizations into irreducibles
\cite{BaKr10HFKR, GeHa06NUF, Ka81CI, Na19NAB, Rd83ANF}.

For rings of integers in number fields, their existence actually
characterizes non-unique factorization, as
Chapman and Krause~\cite{ChKr12AD} have shown.

Here, we investigate absolutely and non-absolutely irreducible elements
in the context of non-unique factorization into irreducibles in the ring
of integer-valued polynomials on $D$
\[
\IntD=\{f\in K[x]\mid f(D)\subseteq D\},
\]
where $D$ is a principal ideal domain and $K$ its quotient field.

In an earlier paper \cite[Remark 3.9]{FrNaRi19SLF}
we already hinted at a graph-theoretic sufficient condition for
$f\in\IntD$ to be irreducible. We spell this out more fully in
Theorem~\ref{sufficient-for-irreducibility}. This condition
is not, however, necessary.

We formulate a similar graph-theoretic sufficient condition for
$f\in\IntD$ to be absolutely irreducible in
Theorem~\ref{connectedness-sufficient}, and show a partial converse.
Namely, our criterion for absolute irreducibility is necessary and
sufficient in the special case of polynomials with square-free
denominator, cf.~Theorem~\ref{criterion-for-squarefree-abs-irred}.

First, we recall some terminology.
Let $R$ be a commutative ring with identity.
\begin{enumerate}
\item $r\in R$ is called \emph{irreducible} in $R$
(or, an \emph{atom} of $R$) if it is a non-zero non-unit that is
not a product of two non-units of $R$.
\item A \emph{factorization} (into irreducibles) of $r$ in $R$ is an
expression
       \begin{equation}
       \label{eq:fac} r = a_{1}\cdots a_{n}
       \end{equation}
       where $n\ge 1$ and $a_i$ is irreducible in $R$ for $1\le i \le n$.
\item $r,s\in R$ are \emph{associated} in $R$ if there exists
a unit $u \in R$ such that $r = us$. We denote this by $r \sim s$.
\item Two factorizations into irreducibles of the same element,
       \begin{equation}\label{eq:2-fac-same-diff}
       r = a_{1}\cdots a_{n} = b_{1} \cdots b_{m},
       \end{equation}
are called \emph{essentially the same} if $n = m$ and, after a suitable
re-indexing,
$a_{j}\sim b_{j}$ for $1 \leq j \leq m$.
Otherwise, the factorizations in
\eqref{eq:2-fac-same-diff} are called \emph{essentially different}.
\end{enumerate}

\begin{definition}\label{defabsirred}
Let $R$ be a commutative ring with identity.
An irreducible element $c\in R$ is called \emph{absolutely irreducible}
(or, a \emph{strong atom}), if for all natural numbers $n$, every factorization of $c^n$
 is essentially the same as $c^n = c \cdots c$.
\end{definition}

Note the following fine distinction: an element of $R$ that is called
\emph{``not absolutely irreducible''} might not be irreducible at all,
whereas a \emph{``non-absolutely irreducible''} element is assumed to
be irreducible, but not absolutely irreducible.

We now concentrate on integer-valued polynomials over a principal
ideal domain.

Recall that a polynomial in $D[x]$, where $D$ is a principal ideal
domain, is called \emph{primitive} if the greatest common divisor
of its coefficients is $1$.

\begin{definition}\label{defstandardform}
Let $D$ be a principal ideal domain with quotient field $K$, and
$f\in K[x]$ a non-zero polynomial. We write $f$ as
\[
f = \frac{a \prod_{i \in I}g_i}{b},
\]
where $a,b\in D\setminus \{0\}$ with $\gcd(a,b)=1$, $I$ a finite
(possibly empty) set, and each $g_i$ primitive and irreducible in
$D[x]$ and call this the \emph{standard form} of $f$.

We refer to $b$ as the \emph{denominator}, to $a$ as the
\emph{constant factor}, and to $a \prod_{i \in I}g_i$ as
the \emph{numerator} of $f$, keeping in mind that each of
them is well-defined and unique only up to multiplication by
units of $D$.
\end{definition}

\begin{definition}\label{deffd}
For $f\in\IntD$, the \emph{fixed divisor} of
$f$, denoted $\fd(f)$, is the ideal of $D$ generated by $f(D)$.

An integer-valued polynomial $f\in\IntD$
with $\fd(f)=D$ is called \emph{image-primitive}.

When $D$ is a principal ideal domain, we may, by abuse of
notation, write the generator for the ideal, as in $\fd(f)=c$
meaning $\fd(f)=cD$.
\end{definition}

\begin{remark}\label{remstandardint}
Let $D$ be a principal ideal domain with quotient field $K$,
and $f\in K[x]$ written in standard form as in
Definition~\ref{defstandardform}. Then $f$ is in $\IntD$
if and only if $b$ divides $\fd({ \prod_{i \in I}g_i})$.
\end{remark}

\begin{remark}\label{remimageprimitive}
Let $D$ be a principal ideal domain with quotient field $K$.
Then any non-constant
irreducible element of $\IntD$ is necessarily image-primitive.
Otherwise, if a prime element $p \in D$ divides $\fd(f)$, then
\[f = p\cdot \frac{f}{p}\] is a non-trivial factorization of $f$.

Furthermore, $f\in K[x] \setminus\{0\}$ (written in standard form
as in Definition~\ref{defstandardform}) is an image-primitive element
of $\IntD$ if and only if (up to multiplication by units) $a=1$ and
$b=\fd({ \prod_{i \in I}g_i})$.
\end{remark}

\begin{definition}\label{deffdp}
Let $D$ be a principal ideal domain. For $f\in\IntD$, and $p$
a prime element in $D$, we let \[\fdp(f)=\vp(\fd(f))\]
\end{definition}

\begin{remark}\label{remfdnotmult}
By the above definition,
\[
\fd(f) = \prod_{p\in \primes} p^{\fdp(f)}
\quad\textrm{and}\quad
\fdp(f)=\min_{c\in D} \vp(f(c))
\]
where $\primes$ is a set of representatives of the prime elements of
$D$ up to multiplication by units.

By the nature of the minimum function, the fixed divisor is not
multiplicative:
\[\fdp(f) + \fdp(g) \le \fdp(fg),\]
but the inequality may be strict. Accordingly,
\[\fd(f)\fd(g) \divides \fd(fg),\]
 but the division may be strict. Note, however, that
\[\fd(f^n)=\fd(f)^n\]
for all $f\in\IntD$ and $n\in\natn$.
\end{remark}

\section{Graph-theoretic irreducibility criteria}
We refer to, for instance, \cite{BoMu2008GT} for the graph theory
terms we use in this section.
\begin{definition}\label{defessential}
Let $D$ be a principal ideal domain, $I \neq \emptyset$ a finite set
and for $i \in I$, let $g_i \in D[x]$ be non-constant and primitive.
Let $g(x)=\prod_{i\in I}g_i$, and $p\in D$ a prime.
\begin{enumerate}
\item\label{edef}
We say that $g_i$ is \emph{essential} for $p$ among the $g_j$
with $j\in I$ if $p\divides \fixdiv(g)$ and
there exists a $w\in D$ such that $v_p(g_i(w))>0$
and $v_p(g_j(w))=0$ for all $j\in I\setminus\{i\}$.
Such a $w$ is then called a witness for
$g_i$ being essential for $p$.
\item\label{qdef}
We say that $g_i$ is \emph{quintessential} for $p$ among the
$g_j$ with $j\in I$ if $p \divides \fixdiv(g)$ and
there exists  $w\in D$ such that
$v_p(g_i(w))=v_p(\fixdiv(g))$ and
$v_p(g_j(w))=0$ for all $j\in I\setminus\{i\}$.
Such a $w$ is called a witness for $g_i$ being quintessential for $p$.
\end{enumerate}
We will omit saying ``among the $g_j$ with $j\in I$''
if the indexed set of polynomials is clear from the context.
\end{definition}

\begin{remark}\label{essentialunique}
When we consider an indexed set of polynomials $g_i$ with $i\in I$,
we are not, in general, requiring $g_i\ne g_j$ for $i\ne j$. Note,
however, that $g_i$ being essential (among the $g_j$ with $j\in I$)
for some prime element $p \in D$ implies $g_i\not\sim g_j$ in $D[x]$
for all $j\in I\setminus\{i\}$.
\end{remark}

\begin{definition}\label{defgraph}
Let $D$ be a principal ideal domain,
$I \neq \emptyset$ a finite set and for each $i \in I$,
$g_i \in D[x]$ primitive and irreducible.
\begin{enumerate}
\item\label{egraphdef}
The \emph{essential graph} of the indexed set of polynomials
$(g_i\mid i\in I)$ is the simple undirected graph whose set of
vertices is $I$, and in which $(i,j)$ is an edge if and only if
there exists a prime element $p$ in $D$ such that both $g_i$ and $g_j$
are essential for $p$ among the $g_k$ with $k\in I$.
\item\label{qgraphdef}
The \emph{quintessential graph} of the indexed set of
polynomials $(g_i\mid i\in I)$ is the simple undirected graph
whose set of vertices is $I$, and in which $(i,j)$ is an edge if
and only if there exists a prime element $p$ in $D$ such that
both $g_i$ and $g_j$ are quintessential for $p$ among the $g_k$ with $k\in I$.
\end{enumerate}
\end{definition}
\begin{example}\label{example:graphs}
Let $I = \{1, 2, 3, 4\}$ and for $i \in I,$ $g_i \in \Z[x]$ as follows:
\[
g_1 = x^3 - 19,\quad g_2 = x^2+9,\quad g_3 = x^2 + 1,\quad g_4 = x-5,
\]
 and set
\[g = (x^3 -19)(x^2+9)(x^2 + 1)(x-5).\]
A quick check shows that the fixed divisor of $g$ is $15$.
\begin{enumerate}
\item Taking $w=1, 2, 0$ respectively, as witnesses, we see that
$g_2, g_3, g_4$ are quintessential for $5$.
The polynomial $g_1$ is not essential for $5$ because
$v_5(g_1(a)) >0$ only if $a \in 4 + 5\Z$ and for such $a$,
also $v_5(g_2(a)) >0$.
\item Taking $w=1, 0, 2$ respectively, as witnesses, we see that
$g_1, g_2, g_4$ are essential for $3$.
Only $g_4$ is quintessential for $3$.
The polynomial $g_3$ is not essential for $3$.
\end{enumerate}
Figure \ref{fig:Quint} shows the essential and quintessential
graphs of $(g_1, g_2, g_3, g_4)$.

\begin{figure}[H]
\centering
\begin{tikzpicture}[scale=0.33]
    \node[main node] (3) {$3$};
    \node[main node] (4) [right = 2cm of 3]  {$4$};
    \node[main node] (1) [below = 2cm of  3] {$1$};
    \node[main node] (2) [below  = 2cm of 4]  {$2$};
 \node at (3.5,-9.8,1) {\textit{Essential graph}};

      \path[draw,thick]
    (3) edge node {} (4)
    (1) edge node {} (2)
    (3) edge node {} (2)
    (4) edge node {} (1)
    (4) edge node {} (2);
\end{tikzpicture}
\hspace{3cm}
\begin{tikzpicture}[scale=0.33]
    \node[main node] (3) {$3$};
    \node[main node] (4) [right = 2cm of 3]  {$4$};
    \node[main node] (1) [below = 2cm of  3] {$1$};
    \node[main node] (2) [below  = 2cm of 4]  {$2$};
    \node at (3.5,-9.8,1) {\textit{Quintessential graph}};

      \path[draw,thick]
    (3) edge node {} (4)
    (1) edge node {} (1)
    (3) edge node {} (2)
    (4) edge node {} (2);
\end{tikzpicture}
\caption{Graphs for Example \ref{example:graphs}} \label{fig:Quint}
\end{figure}
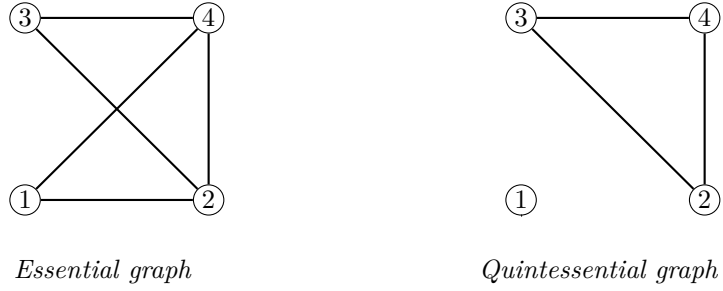
\end{example}

\begin{lemma}\label{same-exponent}
Let $D$ be a principal ideal domain and $f\in\IntD$ a non-constant
image-primitive integer-valued polynomial, written in standard form
according to Definition~\ref{defstandardform} as
\[
f = \frac{\prod_{i\in I} g_i}{\prod_{p\in T} p^{e_p}},
\]
where $T$ is a finite set of pairwise non-associated primes of $D$,
and let $n\in\natn$.

Every $h\in\IntD$ dividing $f^n$ can be written as
\[
h(x) = \frac{\prod_{i\in I} g_i^{\gamma_i(h)}}{\prod_{p\in T}p^{\beta_p(h)}},
\]
with $\gamma_i(h)\in\natn_0$ for $i\in I$ and
unique $\beta_p(h)\in\natn_0$ for $p\in T$. Moreover, every such
representation of $h$ satisfies:

\begin{enumerate}
\item\label{uniquegammaj}
If $q\in T$ and $j\in I$ such that $g_j$ is quintessential
for $q$ among the $i\in I$, then
\[
\beta_q(h) = e_q \gamma_j(h).
\]
\item\label{samegamma}
In particular, whenever $g_j$ and $g_k$ are both quintessential for
the same prime $q\in T$, then $\gamma_j(h)=\gamma_k(h)$.
\end{enumerate}
\end{lemma}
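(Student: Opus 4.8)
The plan is to first determine the shape of an arbitrary divisor $h$ of $f^n$ by passing to the larger ring $K[x]$, and only afterwards to prove (i) (from which (ii) is immediate) by squeezing $h$ against its complementary divisor $h'=f^n/h$. Since $\IntD\subseteq K[x]$ and $K[x]$ is a unique factorization domain in which the irreducible factors of $f^n$ are, up to associates, exactly the $g_i$, any $h\divides f^n$ in $\IntD$ is a divisor in $K[x]$ as well. Hence $h=c\prod_{i\in I}g_i^{\gamma_i}$ for some $c\in K^{\times}$ and $\gamma_i\in\natn_0$, and I can arrange $\prod_{i\in I}g_i^{\gamma_i}\divides\prod_{i\in I}g_i^{n}$ in $D[x]$, the total multiplicity of each irreducible in $h$ being at most its multiplicity in $f^n$. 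The numerator $\prod_i g_i^{\gamma_i}$ is primitive, so writing $c=a/b$ with $\gcd(a,b)=1$ puts $h$ in standard form. Remark~\ref{remstandardint} then gives $b\divides\fixdiv(\prod_i g_i^{\gamma_i})$, and since the fixed divisor is monotone under divisibility (immediate from Remark~\ref{remfdnotmult}) and $f$ is image-primitive, I obtain $\fixdiv(\prod_i g_i^{\gamma_i})\divides\fixdiv(\prod_i g_i^{n})=\prod_{p\in T}p^{ne_p}$, using $\fixdiv(F^n)=\fixdiv(F)^n$ and Remark~\ref{remimageprimitive}. Thus every prime dividing $b$ lies in $T$.

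To show $a$ is a unit I would run the same argument on $h'=f^n/h\in\IntD$ and compare $q$-adic contents on both sides of $f^n=hh'$: for $q\notin T$ this forces $v_q(a)=0$, while for $q\in T$ the assumption $v_q(a)>0$ would make the denominator of $h'$ too large, namely $v_q(b')>ne_q$, contradicting $b'\divides\prod_{p\in T}p^{ne_p}$. This yields the asserted form $h=(\text{unit})\prod_i g_i^{\gamma_i}/\prod_{p\in T}p^{\beta_p}$; the $\beta_p=v_p(b)$ are unique because the standard-form denominator is determined by $h$ up to units, whereas the $\gamma_i$ need not be unique when some of the $g_i$ coincide.

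For (i), fix $q\in T$ and a witness $w$ for $g_j$ being quintessential for $q$, so $v_q(g_j(w))=e_q$ and $v_q(g_i(w))=0$ for $i\ne j$. Remark~\ref{remstandardint} applied to $h$ gives the easy upper bound
\[
\beta_q\le v_q\!\left(\fixdiv\Big(\textstyle\prod_{i\in I} g_i^{\gamma_i}\Big)\right)\le\sum_{i\in I}\gamma_i\,v_q(g_i(w))=e_q\gamma_j,
\]
and likewise $\beta_q'\le e_q\gamma_j'$ for $h'$. Comparing denominators in $f^n=hh'$ yields $\beta_q+\beta_q'=ne_q$; comparing numerators yields $\prod_i g_i^{\gamma_i+\gamma_i'}=\prod_i g_i^{n}$ up to a unit, and since quintessentiality implies essentiality, Remark~\ref{essentialunique} gives $g_j\not\sim g_i$ for $i\ne j$, so matching the exponent of the irreducible $g_j$ forces $\gamma_j+\gamma_j'=n$. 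The chain
\[
ne_q=\beta_q+\beta_q'\le e_q(\gamma_j+\gamma_j')=ne_q
\]
must then be an equality throughout, giving $\beta_q=e_q\gamma_j$.

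Part (ii) is then immediate: two primes quintessential for the same $q$ give $e_q\gamma_j=\beta_q=e_q\gamma_k$, and $e_q\ge 1$ forces $\gamma_j=\gamma_k$. I expect the genuine obstacle to be the \emph{lower} bound $\beta_q\ge e_q\gamma_j$ in (i): the witness supplies only the upper bound, and the reverse inequality emerges solely from the squeeze $\beta_q+\beta_q'=ne_q=e_q(\gamma_j+\gamma_j')$, which itself hinges on the non-associativity $g_j\not\sim g_i$ from Remark~\ref{essentialunique}. Making the ``compare contents'' and ``compare numerators'' steps fully rigorous—carefully tracking units and the failure of uniqueness of the $\gamma_i$ caused by repeated $g_i$—is the part I would treat most carefully.
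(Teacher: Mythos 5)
Your proposal is correct and follows essentially the same route as the paper: the heart of both arguments is the identical squeeze $ne_q=\beta_q(h)+\beta_q(k)\le e_q\gamma_j(h)+e_q\gamma_j(k)=ne_q$, with the upper bound obtained by evaluating at a quintessential witness and the two equalities coming from unique factorization in $D$ and in $K[x]$ together with Remark~\ref{essentialunique}. The only difference is preparatory: you verify that the constant factor is a unit by a content/valuation comparison in $f^n=hh'$, whereas the paper gets this at once from the observation that divisors of the image-primitive polynomial $f^n$ are themselves image-primitive (Remark~\ref{remimageprimitive}).
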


\begin{proof}
We know $\fd(f^n)=\fd(f)^n$ (cf.~Remark~\ref{remfdnotmult}). So, $f^n$
is image-primitive, and, therefore, all polynomials in $\IntD$ dividing
$f^n$ are image-primitive.  Let $f^n = hk$ with $h, k\in\IntD$.
When $h$ is written in standard form as in
Definition~\ref{defstandardform}, the fixed divisor of the
numerator equals the denominator, and the constant factor is a unit.
The same holds for $k$. This is so because $h$ and $k$ are
image-primitive; see Remark~\ref{remimageprimitive}.

Now let $q \in D$ be prime and $j\in I$ such that $g_j$ is quintessential
for $q$. Note that, by Remark~\ref{essentialunique} and unique
factorization in $K[x]$, the exponent of $g_j$ in the numerator
of any factor of $f^n$ is unique.

Writing $f^n = hk$ as
\[
\frac{\prod_{i\in I} g_i^n}{\prod_{p\in T} p^{ne_p}} =
\frac{\prod_{i\in I} g_i^{\gamma_i(h)}}{\prod_{p\in T}p^{\beta_p(h)}}\cdot
\frac{\prod_{i\in I} g_i^{\gamma_i(k)}}{\prod_{p\in T}p^{\beta_p(k)}},
\]
we observe the following equalities and inequalities of the exponents:
\begin{enumerate}
\item\label{eqone}
$ne_q = \beta_q(h) + \beta_q(k)$
\item\label{eqtwo}
$n = \gamma_j(h) + \gamma_j(k)$ and hence
$ne_q = e_q\gamma_j(h) + e_q\gamma_j(k)$
\item\label{ineqthree}
$e_q\gamma_j(h)\ge \beta_q(h)$ and $e_q\gamma_j(k) \ge \beta_q(k)$.
\end{enumerate}

\ref{eqone} follows from unique factorization in $D$.

\ref{eqtwo} follows from unique factorization in $K[x]$
and Remark~\ref{essentialunique}.

To see \ref{ineqthree}, consider a witness $w$
for $g_j$ being quintessential for $q$. Since $f$ is image-primitive,
$e_q= v_q(\fd(\prod_{i\in I}g_i))$, by Remark~\ref{remimageprimitive}. From
Definition~\ref{defessential} and Remark~\ref{remstandardint}
we deduce
\[
e_q\gamma_j(h) = v_q(g_j(w))\gamma_j(h) =
v_q(g_j^{\gamma_j(h)}(w)) =
v_q\left(\prod_{i\in I} g_i(w)^{\gamma_i(h)}\right)\ge \beta_q(h)
\]
(and similarly for $k$ instead of $h$).

Finally, \ref{eqone} - \ref{ineqthree} together
imply $e_q\gamma_j(h) = \beta_q(h)$ and $e_q\gamma_j(k)  = \beta_q(k)$.
\end{proof}

\begin{theorem}\label{sufficient-for-irreducibility}
Let $D$ be a principal ideal domain with quotient field $K$.
Let $f\in\IntD$ be a non-constant image-primitive integer-valued polynomial,
written in standard form as $f=g/b$ with $b\in D \setminus \{0\}$,
and $g=\prod_{i\in I} g_i$, where each $g_i$ is primitive
and irreducible in $D[x]$.

If the essential graph of $(g_i\mid i\in I)$ is connected, then
$f$ is irreducible in $\IntD$.
\end{theorem}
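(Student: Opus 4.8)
The plan is to argue by contraposition: I assume $f=hk$ with $h,k\in\IntD$ and show that one of the factors must be a unit. Applying Lemma~\ref{same-exponent} with $n=1$ (and $T$ the set of primes dividing $b$), I write
\[
h=\frac{\prod_{i\in I}g_i^{\gamma_i(h)}}{\prod_{p\in T}p^{\beta_p(h)}},
\qquad
k=\frac{\prod_{i\in I}g_i^{\gamma_i(k)}}{\prod_{p\in T}p^{\beta_p(k)}}.
\]
Unique factorization in $K[x]$ forces $\gamma_i(h)+\gamma_i(k)=1$ for every $i\in I$, so each exponent lies in $\{0,1\}$ and the index set splits as a disjoint union $I=I_h\sqcup I_k$, where $I_h=\{i:\gamma_i(h)=1\}$ and $I_k=\{i:\gamma_i(k)=1\}$. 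Since $f$ is image-primitive and $\fdp(h)+\fdp(k)\le\fdp(hk)=\fdp(f)=0$ (cf.~Remark~\ref{remfdnotmult}), both $h$ and $k$ are image-primitive as well. Hence by Remark~\ref{remimageprimitive} their denominators are determined by their numerators, namely $\beta_p(h)=\fdp(\prod_{i\in I_h}g_i)$ and $\beta_p(k)=\fdp(\prod_{i\in I_k}g_i)$, while $\beta_p(h)+\beta_p(k)=\vp(b)$ for each $p\in T$.

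The heart of the argument is to show that connectedness of the essential graph forbids both $I_h$ and $I_k$ from being non-empty. Suppose they were. Connectedness then yields an edge of the essential graph joining a vertex $i\in I_h$ to a vertex $j\in I_k$, witnessed by a prime $p$ for which both $g_i$ and $g_j$ are essential. I would now evaluate the relevant fixed divisors at the witnesses. Let $w$ be a witness for $g_j$ being essential for $p$; then $\vp(g_l(w))=0$ for all $l\ne j$, and since $j\notin I_h$ this gives $\vp(\prod_{i\in I_h}g_i(w))=0$, whence $\beta_p(h)=\fdp(\prod_{i\in I_h}g_i)=0$. Symmetrically, a witness for $g_i$ being essential for $p$ forces $\beta_p(k)=0$. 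But then $\vp(b)=\beta_p(h)+\beta_p(k)=0$, contradicting $p\divides\fd(g)=b$, which is built into the definition of essentiality.

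Therefore one of $I_h,I_k$ is empty; say $I_k=\emptyset$. Then the numerator of $k$ is, up to a unit, the empty product, so $k$ is a non-zero constant, and image-primitivity forces its denominator to be trivial, i.e.\ $k$ is a unit of $\IntD$. Thus $f$ admits no factorization into two non-units and is irreducible. The step I expect to require the most care is the localization of the fixed divisor to a single side of the partition: one must check that the witness for an essential polynomial on one side annihilates the \emph{entire} product indexing the other side, which is exactly where the hypotheses $j\notin I_h$ (respectively $i\notin I_k$) and the single-prime nature of an essential-graph edge are used.
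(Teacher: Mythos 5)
Your proof is correct and rests on exactly the same mechanism as the paper's: a witness for an essential $g_j$ forces, via image-primitivity of the factors, the $p$-part of the denominator of every factor whose numerator omits $g_j$ to vanish, so no edge of the essential graph can cross the partition of $I$ induced by a factorization. The remaining differences are cosmetic --- you treat a two-factor decomposition and use the ``no crossing edge'' (cut) formulation of connectedness, with Lemma~\ref{same-exponent} doing bookkeeping, whereas the paper treats an $m$-fold factorization and propagates membership along paths --- so this is essentially the paper's own argument.
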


\begin{proof}
If $\card{I}=1$, then $f$ is irreducible in $K[x]$, and, by being
image-primitive, also irreducible in $\IntD$.

Now assume $\card{I}>1$, and suppose $f$ can be expressed as a
product of $m$ non-units $f=f_1\cdots f_m$ in $\IntD$. Since $\fixdiv(f)=1$,
we see immediately that no $f_i$ is a constant, and that $\fixdiv(f_k)=1$
for every $1\le k\le m$.

Write $f_k=h_k/b_k$ with $b_k\in D$ and $h_k$ primitive in $D[x]$.
Then $b=b_1\cdots b_m$ and there exists a partition of $I$ into non-empty
pairwise disjoint subsets $I=\bigcup_{i=1}^m I_k$, such that
$h_k=\prod_{i\in I_k} g_i$.

Select $i\in I_1$ and $j\in I$ with $j\ne i$. We show that also
$j\in I_1$. Let $i={i_0}, {i_1},\ldots, {i_s}=j$ be a path
from $i$ to $j$ in the essential graph of $(g_i\mid i\in I)$. For
some prime element $p_0$ in $D$ dividing $b$, $g_{i_0}$ and $g_{i_1}$ are both
essential for $p_0$.
As $g_i$ is essential for $p_0$, $p_0$ cannot divide any $b_k$ with
$k\ne 1$ and, hence, $p_0$ divides $b_1$. For any $g_k$ essential for $p_0$
it follows that $k\in I_1$, and, in particular, $i_1\in I_1$.
The same argument with reference to a prime $p_k$ for which both $g_{i_k}$ and
$g_{i_{k+1}}$ are essential, shows for any two adjacent vertices ${i_k}$ and ${i_{k+1}}$ in the path
that they pertain to the same $I_k$, and, finally, that $j\in I_1$.

As $j\in I$ was arbitrary, $I_1=I$ and $m=1$.
\end{proof}

\begin{theorem}\label{connectedness-sufficient}
Let $D$ be a principal ideal domain and $f\in\IntD$ be non-constant
and image-primitive, written in standard form as
\begin{equation*}
f = \frac{\prod_{i\in I} g_i}{\prod_{p\in T} p^{e_p} },
\end{equation*}
where $I \neq \emptyset$ is a finite set and for $i \in I$,
$g_i\in D[x]$ is primitive and irreducible in $D[x]$.

If the quintessential graph $G$ of $(g_i\mid i\in I)$ is connected,
then $f$ is absolutely irreducible.
\end{theorem}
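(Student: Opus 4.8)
The plan is to first reduce the statement to the already-established irreducibility of $f$, and then to dissect an arbitrary factorization of $f^n$. Observe that any pair $g_i,g_j$ that is quintessential for a prime $p$ is, using the very same witness, also essential for $p$: quintessentiality presupposes $p\divides\fixdiv(g)$, so $v_p(\fixdiv(g))>0$ and the condition $v_p(g_i(w))=v_p(\fixdiv(g))$ already forces $v_p(g_i(w))>0$. Hence the quintessential graph is a spanning subgraph of the essential graph, and its connectedness implies that of the essential graph. By Theorem~\ref{sufficient-for-irreducibility}, $f$ is then irreducible, so it remains to show that for every $n\in\N$ each factorization $f^n=c_1\cdots c_m$ into irreducibles of $\IntD$ is essentially $f^n=f\cdots f$.

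Fix such a factorization. Each $c_k$ divides $f^n$, so Lemma~\ref{same-exponent} lets me write
\[
c_k = \frac{\prod_{i\in I} g_i^{\gamma_i(c_k)}}{\prod_{p\in T} p^{\beta_p(c_k)}}.
\]
The first key step is to show that, for fixed $k$, the numerator exponent $\gamma_i(c_k)$ is independent of $i$. This is exactly where connectedness is used: by part~\ref{samegamma} of Lemma~\ref{same-exponent}, any two indices joined by an edge of the quintessential graph carry equal exponents, and since the graph is connected I propagate this equality along a path between any two vertices. I write $\gamma_k$ for the common value, so that the numerator of $c_k$ equals $g^{\gamma_k}$ with $g=\prod_{i\in I} g_i$.

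The second key step turns this into an identification with a power of $f$. As $f^n$ is image-primitive (Remark~\ref{remfdnotmult}) and $c_k\divides f^n$, the factor $c_k$ is image-primitive, so by Remark~\ref{remimageprimitive} its denominator equals the fixed divisor of its numerator. Combining $\fixdiv(g^{\gamma_k})=\fixdiv(g)^{\gamma_k}$ with $\fixdiv(g)=\prod_{p\in T}p^{e_p}$ (image-primitivity of $f$) yields $\beta_p(c_k)=e_p\gamma_k$ for all $p\in T$, whence $c_k\sim f^{\gamma_k}$. Since $c_k$ is a non-unit, $\gamma_k\ge 1$; since $c_k$ is irreducible while $f$ is a non-unit, $\gamma_k=1$ (otherwise $f^{\gamma_k}=f\cdot f^{\gamma_k-1}$ would split $c_k$), so $c_k\sim f$. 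Comparing the exponent of a fixed $g_i$ on both sides of $f^n=c_1\cdots c_m$ then gives $\sum_{k=1}^m\gamma_k=n$, hence $m=n$, and the factorization is essentially $f\cdots f$.

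The main obstacle I anticipate is the first key step: squeezing out of connectedness that all numerator exponents of a given factor coincide, and checking that part~\ref{samegamma} of Lemma~\ref{same-exponent} genuinely applies to each edge — that is, that the prime witnessing an edge lies in $T$, which holds since quintessentiality for $p$ forces $p\divides\fixdiv(g)$ and hence $p\in T$. Once the exponents are equalized, passing to $c_k\sim f^{\gamma_k}$ through image-primitivity is the decisive but essentially formal computation; the degenerate case $\card{I}=1$ needs no separate treatment, the equalization step being vacuous there.
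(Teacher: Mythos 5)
Your proposal is correct and takes essentially the same route as the paper's own proof: write each irreducible factor of $f^n$ via Lemma~\ref{same-exponent}, use part~\ref{samegamma} along paths of the connected quintessential graph to equalize all numerator exponents (including the observation that the edge-witnessing primes lie in $T$), then use image-primitivity (the numerator determines the denominator) to conclude each factor is a power of $f$, hence associated to $f$ by irreducibility. Your preliminary reduction via Theorem~\ref{sufficient-for-irreducibility} (quintessential edges are essential edges) is correct but not needed, since irreducibility of $f$ falls out of the final step anyway.
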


\begin{proof}
Suppose
\begin{equation*}
f^n = \prod_{l=1}^sf_l,\quad\textrm{where}\quad
f_l = \frac{\prod_{i \in I}g_i^{m_l(i)}}{\prod_{p\in T}p^{k_l(p)}}
\end{equation*}
and $0 \leq m_l(i) \leq n$, $0 \leq k_l(p) \leq ne_p$ and for all $i$,
$\sum_{l=1}^sm_l(i) = n$
and for all $p$, $\sum_{l=1}^sk_l(p) = n e_p$.

Fix $t$ with $1 \le t\le s$. We show that $f_t$ is a power of $f$
by showing that each $g_i$ with $i\in I$ occurs in the numerator of
$f_t$ with the same exponent.

Let $i,j\in I$. By the connectedness of the quintessential graph,
there exists a sequence of indices in $I$,
$i= {i_0}, i_1, i_2,\ldots, i_k=j$ and for each $h$,
a prime element $p_h$ in $T$ such that $g_{i_h}$ and $g_{i_{h+1}}$ are both
quintessential for $p_h$. By Lemma \ref{same-exponent}, $g_{i_h}$ and
$g_{i_{h+1}}$ occur in the numerator of $f_t$ with the same exponent.
Eventually, $g_i$ and $g_j$ occur in the numerator of $f_t$ with the same
exponent, for arbitrary $i,j\in I$.
In an image-primitive polynomial, the numerator determines its
denominator (as in Remark~\ref{remimageprimitive})
and, hence, $f_t$ is a power of $f$. Since $f_t$ is irreducible, $f_t=f$.
\end{proof}

\begin{example}
The binomial polynomial
\[{x \choose p} = \frac{x(x-1) \cdots (x-p+1) }{p!}\]
where $p \in \Z$ is a prime, is absolutely irreducible in $\IntZ$,
by Theorem~\ref{connectedness-sufficient}.
\end{example}

The converse of Theorem~\ref{connectedness-sufficient} does not hold
in general.  For instance, the polynomial
\[f = \frac{x^2(x^2+3)}{4} \in \IntZ \]
is absolutely irreducible in $\IntZ$ but the quintessential graph
of $(x, x, x^2+3)$ is not connected.

There is, however, a converse to
Theorem~\ref{connectedness-sufficient} in the special case where
the denominator of $f$ is square-free, as we now proceed to show,
cf.~Theorem~\ref{criterion-for-squarefree-abs-irred}.

\section{Absolutely irreducible polynomials with square-free denominator}

Let $D$ be a principal ideal domain with quotient field $K$. When
we talk of the denominator of a polynomial in $K[x]$, this refers
to the standard form of a polynomial introduced in
Definition~\ref{defstandardform}.

\begin{remark}\label{Remark:gsdistinct}
Let $D$ be a principal ideal domain.
Suppose the denominator of $f\in\Int(D)$, written in standard form
as in Definition~\ref{defstandardform}, is square-free:
\[
f = \frac{\prod_{i \in I}g_i}{\prod_{p \in T}p} .
\]
Then, if $f$ is irreducible in $\IntD$, it follows that each $g_i$
is essential for some $p \in T$.  Otherwise, we can split off $g_i$.
This further implies $g_i \not\sim g_j$ in $D[x]$ for $i \neq j$,
whenever $f\in\Int(D)$ with square-free denominator is irreducible. A criterion for irreducibility
of an integer-valued polynomial with square-free denominator
has been given by Peruginelli \cite{Peru2015SFF}.
\end{remark}

\begin{theorem}\label{criterion-for-squarefree-abs-irred}
Let $D$ be a principal ideal domain and $f\in\IntD$ be
non-constant and image-primitive, with
square-free denominator, written in standard form as
\[
f = \frac{\prod_{i\in I} g_i}{\prod_{p\in T} p },
\]
where $I \neq \emptyset$ is a finite set and for $i \in I$,
$g_i\in D[x]$ is primitive and irreducible in $D[x]$.

Let $G$ be the quintessential graph of $(g_i\mid i\in I)$
as in Definition~\ref{defgraph}.

Then $f$ is absolutely irreducible if and only if $G$
is connected.
%
\end{theorem}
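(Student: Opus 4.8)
The plan is to treat the two implications separately. The direction ``$G$ connected $\Rightarrow$ $f$ absolutely irreducible'' is exactly Theorem~\ref{connectedness-sufficient}, so there is nothing new to do; note that it also guarantees that a polynomial with connected $G$ is in particular irreducible. Hence the entire content lies in the converse, which I would prove in contrapositive form: \emph{if $G$ is disconnected, then $f$ is not absolutely irreducible}. If $f$ happens to be reducible there is nothing to prove, a reducible element not being absolutely irreducible by definition. So I may assume $f$ is irreducible, in which case Remark~\ref{Remark:gsdistinct} tells me that the $g_i$ are pairwise non-associated and each is essential for some prime of $T$.

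Under this assumption the goal becomes: exhibit, for a suitable exponent $n$, a factor $h\in\IntD$ of $f^{n}$ that is not associated to any power $f^{k}$. Once such an $h$ is found, I factor both $h$ and $f^{n}/h$ into irreducibles and obtain a factorization of $f^{n}$ that is essentially different from $f^{n}=f\cdots f$: were it essentially the same, every irreducible factor would be associated to $f$ and hence $h\sim f^{k}$, a contradiction. (The needed factorizations into irreducibles exist because any factor of an image-primitive polynomial is again image-primitive, so non-constant, whence factorization lengths are bounded by the numerator degree.)

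To construct $h$ I would use the square-free hypothesis through the equality $\fdp(g)=1$ for every $p\in T$, which holds by image-primitivity (Remark~\ref{remimageprimitive}) and forces $\vp(g(c))\ge 1$ for all $c$. Consequently the residues $c$ with $\vp(g(c))=1$ are precisely those at which a single $g_{j}$ vanishes, to first order; such a $g_{j}$ is automatically quintessential for $p$, and since all polynomials quintessential for $p$ are mutually adjacent in $G$, they lie in one connected component $C_{p}$. Now I choose a component $C^{*}$ with $\emptyset\ne C^{*}\subsetneq I$, put $n=3$ and set the numerator exponents $\gamma_{i}=2$ for $i\in C^{*}$ and $\gamma_{i}=1$ otherwise, and define $h=\prod_{i}g_{i}^{\gamma_{i}}\big/\prod_{p\in T}p^{\beta_{p}}$ with $\beta_{p}=\fdp\big(\prod_{i}g_{i}^{\gamma_{i}}\big)$, so that $h$ is image-primitive by construction. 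Since the $\gamma_{i}$ are not all equal, $h$ is associated to no power of $f$.

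The main obstacle, and the technical heart of the argument, is to show that $f^{n}/h$ again lies in $\IntD$; equivalently that the fixed divisor splits additively, $\fdp\big(\prod_{i}g_{i}^{\gamma_{i}}\big)+\fdp\big(\prod_{i}g_{i}^{n-\gamma_{i}}\big)=n$ for each $p\in T$. Since $\fdp$ is only superadditive (Remark~\ref{remfdnotmult}), I must exhibit for each $p$ a residue minimizing both factors at once. I would take a witness $c_{0}$ with $\vp(g(c_{0}))=1$ and verify, for all $c$, the two-sided bound $a\le\sum_{i}\gamma_{i}\vp(g_{i}(c))\le a+n\big(\vp(g(c))-1\big)$, where $a$ is the common exponent assigned to $C_{p}$ (so that $\beta_{p}=a$). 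The residues with $\vp(g(c))=1$ meet these with equality, the unique vanishing $g_{j}$ being quintessential and lying in $C_{p}$, hence contributing exactly $a$; for residues with $\vp(g(c))=m\ge 2$ the choice $n=3$ and $1\le\gamma_{i}\le 2$ give the slack $m\le\sum_{i}\gamma_{i}\vp(g_{i}(c))\le 2m$, $a\le m$ and $2m\le a+3(m-1)$. Completing this valuation bookkeeping for every $p\in T$, and invoking Lemma~\ref{same-exponent} together with the non-equality of the $\gamma_{i}$, finishes the proof.
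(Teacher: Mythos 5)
Your proposal is correct and follows essentially the same route as the paper: it reduces to the contrapositive, assumes $f$ irreducible so that the $g_i$ are pairwise non-associated by Remark~\ref{Remark:gsdistinct}, and exhibits an essentially different factorization of $f^3$ by squaring the numerator exponents on one side of the disconnection --- your $h$ is exactly the paper's first explicit factor, with $\beta_p=2$ for primes whose quintessential polynomials lie in $C^*$ and $\beta_p=1$ otherwise. The only difference is presentational: you spell out the valuation bookkeeping showing that both $h$ and $f^3/h$ lie in $\IntD$, a verification the paper leaves implicit in its displayed factorization.
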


\begin{proof}
In view of Theorem~\ref{connectedness-sufficient}, we only need to
show necessity. If $\card{I}=1$, then $G$ is connected.
Now assume $\card{I}>1$, and suppose $G$ is not connected.
We show that $f$ is not absolutely irreducible. If $f$ is not even
irreducible, we are done. So suppose $f$ is irreducible. This implies
$g_i \not\sim g_j$ in $D[x]$ for $i \neq j$,
by Remark~\ref{Remark:gsdistinct}.
Since $G$ is not connected,
$I$ is a disjoint union of $J_1$ and $J_2$, both non-empty,
such that there is no edge $(i,j)$ with $i\in J_1$ and $j\in J_2$.

We express $T$ as a disjoint union of $T_1$ and $T_2$ by assigning
every $p\in T$ for which some $g_i\in J_1$ is quintessential to $T_1$,
every $p\in T$ for which some $g_i\in J_2$ is quintessential to $T_2$,
and assigning each $p\in T$ for which no $g_i$ is quintessential
to $T_1$ or $T_2$ arbitrarily.
(It may happen that $T_1 = \emptyset$ and $T_2 = T$ or vice versa).

Then $f^3$ factors in $\IntD$ as follows:
\begin{equation*}
f^3 = \frac{\left(\prod_{i\in J_1} g_i\right)^2 \prod_{j\in J_2} g_j}
{\left(\prod_{p\in T_1} p\right)^2 \prod_{q\in T_2} q} \cdot
\frac{\left(\prod_{j\in J_2} g_j\right)^2 \prod_{i\in J_1} g_i}
{\left(\prod_{q\in T_2} q\right)^2 \prod_{p\in T_1} p}.
\end{equation*}

As $\IntD$ is atomic (cf.~\cite{CaCha1995ElaIVP}), each of the two
factors above can further be factored into irreducibles. Since
$J_1$ and $J_2$ are both non-empty and $g_i \not\sim g_j$ in
$D[x]$ (and hence, $g_i \not\sim g_j$ in $K[x]$) for $i \neq j$,
it is clear that the resulting factorization of $f^3$ into irreducibles
is essentially different from $f \cdot f \cdot f$.
\end{proof}

\begin{remark}
Let $D$ be a principal ideal domain.
The proof of Theorem~\ref{criterion-for-squarefree-abs-irred} shows
that any non-absolutely irreducible element $f\in\Int(D)$ with square-free
denominator exhibits non-unique factorization of $f^n$ already for $n=3$.
\end{remark}

If $f(x) = {\prod_{i \in I}g_i(x)}/{p}$, where $D$ is a principal
ideal domain, $p$ a prime of $D$ and each $g_i\in D[x]$ primitive and
irreducible in $D[x]$, then it is easy to see that $f$ is an
irreducible element of $\IntD$
if and only if
\begin{enumerate}
\item
$\fd(\prod_{i \in I}g_i(x))=p$ and
\item
each $g_i$ is essential for $p$, that is,
for each $i \in I$ there exists
$w_i \in D$ such that $v_p(g_i(w_i))>0$ and $v_p(g_j(w_i))=0$ for
all $j\in I\setminus\{i\}$.
\end{enumerate}

An analogous statement relates absolutely irreducible integer-valued
polynomials with prime denominator
to quintessential irreducible factors of the numerator:

\begin{corollary}
Let $D$ be a principal ideal domain, $p \in D$ a prime, and
$I \neq \emptyset$ a finite set.
For $i\in I$, let $g_i\in D[x]$ be primitive and irreducible in $D[x]$. Let
\begin{equation*}
f(x) = \frac{\prod_{i \in I}g_i(x)}{p}.
\end{equation*}

Then $f$ is an absolutely irreducible element of $\IntD$ if and only if
\begin{enumerate}
\item
$\fd(\prod_{i \in I}g_i(x))=p$ and
\item
each $g_i$ is quintessential for $p$ among the $g_i$ with $i\in I$,
that is, for each $i \in I$ there exists $w_i \in D$ such that
$v_p(g_i(w_i))=1$ and $v_p(g_j(w_i))=0$ for all $j\in I\setminus\{i\}$.
\end{enumerate}
\end{corollary}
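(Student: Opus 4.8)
The plan is to read the corollary as the single-prime special case of Theorem~\ref{criterion-for-squarefree-abs-irred}: a single prime $p$ in the denominator is automatically square-free (here the set $T$ of Theorem~\ref{criterion-for-squarefree-abs-irred} is just $\{p\}$), so once I know that $f$ is non-constant, image-primitive, and presented in standard form, absolute irreducibility will be equivalent to connectedness of the quintessential graph $G$, and it will remain only to translate that connectedness into condition~(ii) and to identify condition~(i) with image-primitivity.

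First I would note that each $g_i$, being primitive and irreducible in $D[x]$, is non-constant (a constant primitive polynomial is a unit, hence not irreducible), so $\prod_{i\in I}g_i$ is primitive by Gauss's lemma and $f=\prod_{i\in I}g_i/p$ is genuinely in standard form, with constant factor $1$ and denominator $p$. A short computation with Remark~\ref{remfdnotmult}, comparing $\vp(f(c))=\vp(\prod_{i\in I}g_i(c))-1$ with $v_q(f(c))=v_q(\prod_{i\in I}g_i(c))$ for primes $q\not\sim p$, shows that $f\in\IntD$ and is image-primitive precisely when $\fd(\prod_{i\in I}g_i)=p$, that is, precisely when~(i) holds; this is the content of Remark~\ref{remimageprimitive}.

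The key observation is that, because $p$ is the only prime available, the quintessential graph $G$ is connected if and only if every $g_i$ is quintessential for $p$. Indeed, an edge of $G$ can only be contributed by $p$ (quintessentiality for a prime $q$ forces $q\divides\fd(\prod_{i\in I}g_i)=p$), and such an edge joins $i$ and $j$ exactly when both $g_i$ and $g_j$ are quintessential for $p$; hence if all $g_i$ are quintessential for $p$ then $G$ is complete, while conversely, if $G$ is connected and $\card{I}>1$ then every vertex has a neighbour and is therefore quintessential for $p$. Moreover, under~(i) we have $\vp(\fd(\prod_{i\in I}g_i))=1$, so $g_i$ being quintessential for $p$ is exactly the existence of a witness $w_i$ with $\vp(g_i(w_i))=1$ and $\vp(g_j(w_i))=0$ for $j\ne i$, which is condition~(ii).

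Assembling these pieces gives both directions. For necessity, if $f$ is absolutely irreducible then it is in particular a non-constant irreducible element, hence image-primitive, which forces~(i); Theorem~\ref{criterion-for-squarefree-abs-irred} then yields that $G$ is connected, and the observation above converts this into~(ii). For sufficiency, (i) makes $f$ image-primitive with square-free denominator and (ii) makes $G$ connected, so Theorem~\ref{criterion-for-squarefree-abs-irred} delivers absolute irreducibility. Since the corollary is this direct, I expect no substantial obstacle; the only point needing care is the boundary case $\card{I}=1$, where connectedness of $G$ is vacuous and one must verify directly that, given~(i), the unique factor $g_1$ is quintessential for $p$ (the condition on the other factors being empty).
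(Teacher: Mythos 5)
Your proposal is correct and takes essentially the same route as the paper's own proof: both directions are reduced to Theorem~\ref{criterion-for-squarefree-abs-irred}, with condition (i) identified with image-primitivity of $f$ and connectedness of the quintessential graph translated into every $g_i$ being quintessential for $p$ (the only prime that can contribute edges). Your additional checks---the standard-form verification, the explicit edge argument, and the boundary case $\card{I}=1$---simply make explicit details that the paper's terser proof leaves implicit.
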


\begin{proof}
If $\fd(\prod_{i \in I}g_i(x))=p$, then $f\in\IntD$ with $\fd(f)=1$,
and Theorem~\ref{criterion-for-squarefree-abs-irred} applies.
If, on the other hand, $f$ is in $\IntD$
and is absolutely irreducible, then $f$ is, in particular, irreducible
and therefore $\fd(f)=1$, and, again,
Theorem~\ref{criterion-for-squarefree-abs-irred} applies.
Now the statement follows from the fact that, whenever
$\fd(\prod_{i \in I}g_i(x))=p$ is prime, the quintessential graph
of $(g_i\mid i\in I)$ is connected if and only if every $g_i$
is quintessential for $p$.
\end{proof}

We conclude by an example of how to apply
Theorem~\ref{criterion-for-squarefree-abs-irred}:

\begin{example}\label{example:nonabsirred}
The following polynomial $f\in\IntZ$ is irreducible, by
Theorem~\ref{sufficient-for-irreducibility}; but not
absolutely irreducible, by Theorem~\ref{criterion-for-squarefree-abs-irred}:
\[f = \frac{(x^3 -19)(x^2+9)(x^2 + 1)(x-5)}{15} \]
This is so because the essential graph of
$(x^3 -19, x^2+9, x^2 + 1, x-5)$ is
connected, but the quintessential graph is not connected,
see Example~\ref{example:graphs} and Figure~\ref{fig:Quint}.
\comment{
\[
f^3 = \frac{[(x^2+9)(x^2 + 1)(x-5)]^2(x^3 -19)}{15^2} \cdot
\frac{(x^3 -19)^2[(x^2+9)(x^2 + 1)(x-5)] }{15}
\]
\[
=\frac{[(x^2+9)(x^2 + 1)(x-5)]^2(x^3 -19)}{15^2} \cdot
\frac{(x^3 -19)(x^2+9)(x^2 + 1)(x-5)}{15} \cdot (x^3 -19)
\]
is a factorization of $f^3$ essentially different from $f\cdot f \cdot f$.
}
\end{example}

\nocite{Peru2015SFF}
\bibliographystyle{plain}
\bibliography{bib_Absolutely_irreducibles}

\begin{thebibliography}{10}

\bibitem{BaKr10HFKR}
Paul Baginski and Ross Kravitz.
\newblock A new characterization of half-factorial {K}rull monoids.
\newblock {\em J. Algebra Appl.}, 9(5):825--837, 2010.

\bibitem{BoMu2008GT}
J.~A. Bondy and U.~S.~R. Murty.
\newblock {\em Graph theory}, volume 244 of {\em Graduate Texts in
  Mathematics}.
\newblock Springer, New York, 2008.

\bibitem{CaCha1995ElaIVP}
Paul-Jean Cahen and Jean-Luc Chabert.
\newblock Elasticity for integral-valued polynomials.
\newblock {\em J. Pure Appl. Algebra}, 103(3):303--311, 1995.

\bibitem{ChKr12AD}
Scott~T. Chapman and Ulrich Krause.
\newblock A closer look at non-unique factorization via atomic decay and strong
  atoms.
\newblock In Christopher Francisco, Lee Klingler, Sean Sather-Wagstaff, and
  Janet~C. Vassilev, editors, {\em Progress in commutative algebra 2 ---
  Closures, Finiteness and Factorization}, pages 301--315. Walter de Gruyter,
  Berlin, 2012.

\bibitem{FrNaRi19SLF}
Sophie Frisch, Sarah Nakato, and Roswitha Rissner.
\newblock Sets of lengths of factorizations of integer-valued polynomials on
  dedekind domains with finite residue fields.
\newblock {\em Journal of Algebra}, 528:231 -- 249, 2019.

\bibitem{GeHa06NUF}
Alfred Geroldinger and Franz Halter-Koch.
\newblock {\em Non-unique factorizations}, volume 278 of {\em Pure and Applied
  Mathematics (Boca Raton)}.
\newblock Chapman \& Hall/CRC, Boca Raton, FL, 2006.
\newblock Algebraic, combinatorial and analytic theory.

\bibitem{Ka81CI}
J.~Kaczorowski.
\newblock A pure arithmetical characterization for certain fields with a given
  class group.
\newblock {\em Colloq. Math.}, 45(2):327--330, 1981.

\bibitem{Na19NAB}
Sarah Nakato.
\newblock Non-absolutely irreducible elements in the ring of integer-valued
  polynomials.
\newblock {\em Comm. Algebra}, 48(4):1--14, 2020.

\bibitem{Peru2015SFF}
Giulio Peruginelli.
\newblock Factorization of integer-valued polynomials with square-free
  denominator.
\newblock {\em Comm. Algebra}, 43(1):197--211, 2015.

\bibitem{Rd83ANF}
David~E. Rush.
\newblock An arithmetic characterization of algebraic number fields with a
  given class group.
\newblock {\em Math. Proc. Cambridge Philos. Soc.}, 94(1):23--28, 1983.

\end{thebibliography}

\end{document}